\def\ee{{\mathcal E}}
\def\rr{{\mathcal R}}
\def\vv{{\mathcal V}}
\def\ww{{\mathcal W}}
\def\eps{\varepsilon}
\def\dst{\displaystyle}
\def\C{{\mathbb{C}}}
\def\R{{\mathbb{R}}}
\def\T{{\mathbb{T}}}
\newcommand{\norm}[1]{{\left\|{#1}\right\|}}
\newcommand{\abs}[1]{{\left|{#1}\right|}}
\newcommand{\scal}[1]{{\left\langle{#1}\right\rangle}}
\theoremstyle{plain}
\newtheorem{lemma}{Lemma}[section]
\newtheorem{proposition}[lemma]{Proposition}
\newtheorem{theorem}[lemma]{Theorem}
\newtheorem{conjecture}[lemma]{Conjecture}
\newtheorem{question}[lemma]{Question}
\theoremstyle{definition}
\newtheorem{definition}[lemma]{Definition}
\theoremstyle{remark}
\numberwithin{equation}{section}
\begin{document}

\title{Phase retrieval for solutions of\\ the Schr\"odinger equations}

\author{
\IEEEauthorblockN{Philippe {\sc Jaming}}

\IEEEauthorblockA{Univ. Bordeaux, IMB, UMR 5251\\ F-33400 Talence, France\\\
CNRS, IMB, UMR 5251\\ F-33400 Talence, France\\
{\tt Philippe.Jaming@math.u-bordeaux.fr}}}

\maketitle

\begin{abstract}
In this note we present several questions about the phase retrieval problem for the
Schr\"odinger equation. Some partial answers are given as
well as some of the heuristics behind these questions.
\end{abstract}

\begin{IEEEkeywords}
Phase retrieval, Schr\"odinger equation
\end{IEEEkeywords}

\section{Introduction}

The phase retrieval problem consists in reconstructing a function from its modulus or the modulus of some
transform of it (frame coefficient, Fourier transform,...) and some structural information on the function ({\it e.g.}
to be compactly supported). Such a problem occurs in many scientific fields:
microscopy, holography, crystallography, neutron radiography,
optical coherence tomography, optical design, radar signal processing
and quantum mechanics to name a few. 

As for quantum mechanics, the most popular phase retrieval problem in the area is
Pauli's problem which asks whether a function $f\in L^2(\R^d)$
is uniquely determined (up to a global phase factor) by its modulus $|f|$ and the modulus of its Fourier transform $|\widehat{f}|$. In other words we are asking whether
$f,g\in L^2(\R^d)$ such that $|f|=|g|$ and $|\widehat{f}|=|\widehat{g}|$ implies
$f=cg$ with $c\in \T:=\{z\in\C\,:|z|=1\}$ (a global phase factor). The answer to this question is well known to be negative and there are now many counter-examples ({\it see e.g.} \cite{AJ} and references therein).

It was further conjectured by Wright that there exists a third unitary operator $U\,:L^2(\R^d)\to L^2(\R^d)$ such that $|f|,|\widehat{f}|,|Uf|$ uniquely determine $f$.
As far as we know, this question is open at the time of writing this note,
but if $U$ is only asked to be self adjoint, then there are examples for which the answer
is positive ({\it see} \cite{JR} for more on the problem and the explanation on
why the assumption of $U$ being unitary is crucial).

Our aim here is in some sense more modest as we are looking for a one parameter
family of unitary operators (actually a semi-group) such that $|U_\alpha f|$
uniquely determines $f$ up to a global phase factor. As far as we know,
the only family known so far that answers this question positively is
given by the fractional Fourier transform \cite{Ja}.
It turns out that, thanks to minor renormalization computations, the result can be
reformulated in terms of solutions of Schr\"odinger equations:

\begin{proposition}[Jaming \cite{Ja}]\label{prop1}
Let $u_0,v_0\in L^2(\R^d)$ and let $u,v$ be the solution of the
{\em free Schr\"odinger equation}
$$
\left\{\begin{matrix}i\partial_tu(t,x)=-\Delta u(t,x)\\ u(0,x)=u_0\end{matrix}\right.
\ ,\quad \left\{\begin{matrix}i\partial_tv(t,x)=-\Delta v(t,x)\\ v(0,x)=v_0\end{matrix}\right..
$$
If $|u(t,x)|=|v(t,x)|$ for every $t\in\R$ and every $x\in\R^d$
then there exists $c\in\R$ such that $u_0=cv_0$.
\end{proposition}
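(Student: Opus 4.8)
The plan is to solve the equation explicitly and recognise the hypothesis as a phase-retrieval statement for the fractional Fourier transform, which I can then attack with a Wigner/ambiguity-function argument. First I would write the free evolution through its propagator: Fourier-transforming the equation gives $\widehat{u(t,\cdot)}(\xi)=e^{-it|\xi|^2}\widehat{u_0}(\xi)$, and undoing the transform produces the classical chirp--Fourier--chirp formula
\[
u(t,x)=\frac{e^{i|x|^2/(4t)}}{(4\pi i t)^{d/2}}\,\widehat{\,e^{i|\cdot|^2/(4t)}u_0\,}\!\Big(\frac{x}{2t}\Big),\qquad t\neq 0 .
\]
Taking moduli, the outer chirp and the constant disappear and the dilation $x\mapsto x/(2t)$ is a harmless change of variables, so for each fixed $t\neq0$ the hypothesis $|u(t,\cdot)|=|v(t,\cdot)|$ is \emph{equivalent} to $\big|\widehat{e^{is|\cdot|^2}u_0}\big|=\big|\widehat{e^{is|\cdot|^2}v_0}\big|$ with $s=1/(4t)$, while $t=0$ contributes $|u_0|=|v_0|$. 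Up to the usual normalisations this says exactly that $|\mathcal{F}_\alpha u_0|=|\mathcal{F}_\alpha v_0|$ for every fractional Fourier order $\alpha$, since as $t$ runs over $\R$ the chirp rate runs over all of $\R$ and $\alpha$ sweeps the whole interval $(0,\pi)$.

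Next I would convert these modulus identities into one identity for a single quadratic object. Squaring and taking the Fourier transform in $x$ replaces $|u(t,\cdot)|^2$ by the autocorrelation of $e^{i|\cdot|^2/(4t)}u_0$; after the chirp phase cancels, this is precisely the ambiguity function $\mathcal{A}_{u_0}(y,\omega)=\int u_0(z)\overline{u_0(z-y)}e^{-iz\cdot\omega}\,dz$ evaluated at $\omega$ proportional to $y$. Thus the whole family of hypotheses collapses to
\[
\mathcal{A}_{u_0}(y,\omega)=\mathcal{A}_{v_0}(y,\omega)\qquad\text{whenever }\omega\parallel y .
\]
Equivalently, the $2d$-dimensional Fourier transform of $W(u_0)-W(v_0)$, the difference of the two Wigner distributions, vanishes on the cone $\{(\eta,\zeta):\zeta\parallel\eta\}$, i.e. every ``sheared projection'' of the two Wigner distributions agrees. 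Once I know $\mathcal{A}_{u_0}=\mathcal{A}_{v_0}$ (equivalently $W(u_0)=W(v_0)$) on \emph{all} of phase space, I would conclude with the standard fact that the ambiguity/Wigner distribution determines a function up to a global unimodular factor, which gives $u_0=c v_0$ with $|c|=1$, the global phase appearing in the statement.

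The main obstacle is the passage from the ``parallel'' slice to the full ambiguity function. In dimension $d=1$ the set $\{\omega\parallel y\}$ is the whole plane, so there is nothing to do; one can even argue hydrodynamically here, since equal densities for all $t$ force equal currents through the continuity equation and hence equal phase gradients. For $d\ge 2$, however, the available data $\{\omega\parallel y\}$ is only $(d+1)$-dimensional inside the $2d$-dimensional phase space, so a naive inverse Radon transform is unavailable and the conclusion genuinely exploits the special structure of ambiguity functions (their analyticity and multiplicative constraints). This is exactly the content of the fractional Fourier transform phase-retrieval theorem of \cite{Ja}, which I would invoke to bridge the gap; the reduction above shows that Proposition~\ref{prop1} is precisely its reformulation for the free Schr\"odinger flow.
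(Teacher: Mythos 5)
Your proposal follows essentially the same route as the paper: the paper's Section~\ref{sec:shr} argument converts $|u(t,\cdot)|^2$ into projections of the Wigner function of $u_0$ --- equivalently, via the Fourier slice theorem, into the ambiguity function of $u_0$ restricted to the lines $\{(2t\eta,\eta):t\in\R\}$ --- which is exactly your chirp--Fourier--chirp reduction, and like you it ultimately defers the rigorous completion to \cite{Ja} (the proposition is stated there, and the in-paper Wigner/X-ray argument is explicitly ``not totally rigorous''). If anything, your write-up is more candid than the paper's sketch in flagging the real difficulty: for $d\ge 2$ the data only covers the $(d+1)$-dimensional cone $\{\zeta\parallel\eta\}$ in $\R^{2d}$, so plain X-ray/Radon inversion does not suffice and the special structure of ambiguity functions must be exploited, which is precisely the step supplied by \cite{Ja}.
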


This proposition is very simple and most of the content of \cite{Ja} deals with the following questions linked to Wright's conjecture: can one restrict the set of times $t$
needed to obtain the same answer, at least if one restrict $u_0$ and/or $v_0$ to be in
some set of functions with some additional structure. For instance, it is shown that
if $u_0,v_0$ belong to the spectral subsets of the Laplacian (that is to the Paley-Wiener spaces)
then an explicit discrete set of time suffices.

\smallskip

Our aim here is to start a different line of investigation. The previous proposition
is restricted to the free Schr\"odinger equation on $\R^d$. It is natural
to try to extend this result to a more general setting in which the Schr\"odinger equation
has a natural physical meaning:

Let $\Omega$ be any structure on which the Laplace operator has a natural analogue. For instance

-- The natural Euclidean metric on $\R^d$ could be deformed which would lead to $-\Delta$
to be replaced by a more general elliptic operator $-\mathrm{div}(A\nabla)$.

-- Instead of $\Omega=\R^d$, one could consider the Schr\"odinger equation
on an open set (say with smooth boundary) and add some boundary conditions.

-- More generally $\Omega$ could be a Riemannian manifold and $-\Delta$ the Laplace Beltrami operator.

-- In the opposite direction, $\Omega$ could be a graph (finite or not) and $-\Delta$
be the combinatorial Laplacian.

-- The intermediate situation of quantum graphs (in short, graphs in which the edges are intervals with boundary conditions on the vertices) is also of interest.

Once this is set, a further question comes immediately, even in the simplest case of $\R^d$:
what is the influence of the potential $W\,:\Omega\to\R^+$ (or $\Omega\to\R$).
In short, we are asking the following:

\begin{question}\label{Q}
Let $\Omega$ be any of the above structures and $-\Delta$ be the (positive) Laplacian associated to it. Let $W\,:\Omega\to\R^+$ be a potential. Let $u_0,v_0$ be two functions on $\Omega$
and let $u,v$ be the solutions of the same
{\em Schr\"odinger equation}
$$
\left\{\begin{matrix}i\partial_tu=-\Delta u+Wu\\ u(0,x)=u_0\end{matrix}\right.
\quad \left\{\begin{matrix}i\partial_tv=-\Delta v+Wv\\ v(0,x)=v_0\end{matrix}\right.
$$
but with initial data $u_0$ and $v_0$ respectively.
Assume that $|u(t,x)|=|v(t,x)|$ for every $t\in\R$ and every $x\in\Omega$.
Does there exists $c\in\R$ such that $u_0=cv_0$.

One may eventually ask the same question when $u_0,v_0$ belong only to a spectral
subset of $-\Delta+W$ (or any other natural restriction). In particular,
can the set of times be reduced?
\end{question}

Of course, one needs to impose some restrictions on $W$ for the question to even make sense, but we will not elaborate in this direction in this note. Our main message in this note
is that the question should be investigated in further detail and that
\begin{center}
{\em there are many more questions than answers so far!}
\end{center}

Of course, we have only asked the question of uniqueness, the other natural questions on phase 
retrieval should also be asked: stability with respect to noise, possibility of sampling, reconstruction algorithms,...

\smallskip

The remaining of the paper is organised as follows: in the next section, we give some
information on the Schr\"odinger equation on $\R^d$ that will
lead to a conjecture that we think reasonable.
The third section, we make some observation about our problem on finite graphs
that will lead to some partial results on our question in this setting.

\section{The Schr\"odinger equation on $\R^d$}
\label{sec:shr}

In this section, we make some observations on the Schr\"odinger equation on $\R^d$
and the related phase retrieval problem. We here stay on an {\em heuristic level}
and do not aim for full mathematical rigour as this would require a much longer
exposition. One may refer to \cite{LP} or to the survey \cite{Fral} for further detail.

We consider the {\it scaled} Schr\"odinger equation on $\R\times\R^d$
$$
i\eps\partial_t u^\eps=-\frac{\eps^2}{2}\Delta u^\eps+W(x)u^\eps
$$
with initial data $u^\eps(t=0,x)=u_0^\eps(x)$. Here $\eps$ stands for the
scaled Plank constant, $u^\eps=u^\eps(t,x)$ is the wave function and $W$
is a potential. From the point of view of physics, the interesting
quantity is not $u^\eps$ but the {\em position density} $|u^\eps|^2$.

Next, we introduce the {\em Wigner transform} of a function $f\in L^2(\R^d)$ by
$$
\ww^\eps[f](x,\xi)=\dfrac{1}{(2\pi)^{d/2}}\int_{\R^d}
f\left(x-\eps\frac{\eta}{2}\right)\overline{f\left(x+\eps\frac{\eta}{2}\right)}e^{i\scal{\eta,\xi}}\,\mbox{d}\eta.
$$
Recall that
$$
\ww^\eps[f](x,\xi)=\ww^\eps[g](x,\xi)\quad\Leftrightarrow\quad f=cg,\ c\in\T.
$$

Now, in the free case, the scaling is not needed and we take $\eps=1$.
If $u$ solves the Schr\"odinger equation,
$i\partial_t u=-\frac{1}{2}\Delta u$
then $\omega(t,x,\xi):=\ww^1[u(t,\cdot)](x,\xi)$ solves the
{\em transport equation}
$$
\partial_t\omega+\scal{\xi,\nabla_x\omega}=0.
$$
It follows that $\omega(t,x,\xi)=\omega(0,x-\xi t,\xi)$.
On the other hand, the well-known marginal properties
of the Wigner transform read
\begin{eqnarray*}
|u(t,x)|^2&=&\int_{\R^d}\omega(t,x,\xi)\,\mbox{d}\xi\\
&=&\int_{\R^d}\omega(0,x-\xi t,\xi)\,\mbox{d}\xi.
\end{eqnarray*}
This is nothing but the $X$-ray transform of $\omega(0,\cdot)$ through the line
$(x,0)+\R(-t,1)$. Inversion properties of the $X$-ray transform
then show that $|u(t,x)|$ determines the Wigner transform of $u_0$
and thus $u_0$ up to a global phase factor. This gives
a new (but not totally rigorous at this stage) proof of Proposition \ref{prop1}.
The original proof was slightly different. It consisted in showing that
$|u(t,x)|$ determines the restriction of the ambiguity function of $u$
to certain lines. As the
ambiguity function is the $\R^{2d}$ Fourier transform of the Wigner function,
the link between the 2 proofs is the well-known Fourier Slice Theorem for the X-ray transform.

\smallskip

In the presence of a potential $W\not=0$, the situation is more complicated and
we have no answer to our problem so far. Nevertheless we have an heuristic that
leads us to believe that the answer should be positive in many cases.
To describe it, we will now need the scaled Planck constant $\eps$. We then
consider  $\omega^\eps(t,x,\xi):=\ww^\eps[u^\eps(t,\cdot)](x,\xi)$
which still satisfies a PDE known as the {\em Wigner equation}. The formal limit when $\eps\to0$
of this equation leads to the following {\em Vlasov} equation
$$
\partial_t\omega^0+\scal{\xi,\nabla_x\omega^0}-\scal{\nabla_xW(x),\nabla_\xi\omega^0}=0
$$
satisfied by the limit $\omega^0$ of $\omega^\eps$. The meaning of this limit needs to be made precise (and shown to exist) and is called the {\em Wigner measure}. It is constant along
Hamiltonian trajectories $\bigl(x(t,y,\eta),\xi(t,y,\eta)\bigr)$ satisfying
$$
\left\{\begin{matrix}
\partial_tx(t,y,\eta)=\xi(t,y,\eta)&x(0,y,\eta)=y\\
\partial_t\xi(t,y,\eta)=-\nabla_xW\bigl(x(t,y,\eta)\bigr)&\xi(0,y,\eta)=\eta
\end{matrix}\right..
$$
For instance, in the case of the harmonic oscillator, $W(x)=\frac{x^2}{2}$, ($d=1$)
we obtain that $\omega^0$ is constant along circles and the circular Radon transform should play
the same role as the X-ray transform. It is thus natural to conjecture the following:

\begin{conjecture}
Let $u_0,v_0\in L^2(\R^d)$ and let $u,v$ be the solution of the
{\em free Schr\"odinger equation}
$$
\left\{\begin{matrix}i\partial_tu(t,x)=-\Delta u(t,x)+|x|^2u(t,x)\\ u(0,x)=u_0\end{matrix}\right.
$$
and
$$
\left\{\begin{matrix}i\partial_tv(t,x)=-\Delta v(t,x)+|x|^2v(t,x)\\ v(0,x)=v_0\end{matrix}\right..
$$
If $|u(t,x)|=|v(t,x)|$ for every $t\in\R$ and every $x\in\R^d$
then there exists $c\in\R$ such that $u_0=cv_0$.
\end{conjecture}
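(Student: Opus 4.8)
\section*{Proof proposal for the Conjecture}

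The plan is to exploit the fact that, unlike a generic potential, the quadratic potential $|x|^2$ renders the Schr\"odinger equation \emph{exactly solvable}: its propagator is, up to a global phase, nothing but the fractional Fourier transform. This reduces the conjecture to the fractional Fourier transform uniqueness result already established in \cite{Ja}, bypassing entirely the Wigner measure / circular Radon heuristic that motivates the statement. Concretely, write $H=-\Delta+|x|^2$ and recall that the Hermite functions $(h_n)_{n\in\N^d}$ form an orthonormal basis of $L^2(\R^d)$ diagonalizing $H$, with $Hh_n=(2|n|+d)h_n$ and $|n|=n_1+\cdots+n_d$.

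The first step is to use Mehler's formula (equivalently, the metaplectic representation of the quadratic flow) to identify the propagator: for every $t\in\R$,
$$
e^{-itH}=e^{-idt}\,\ff^{2t},
$$
where $\ff^{\alpha}$ denotes the $d$-dimensional fractional Fourier transform, characterized by $\ff^{\alpha}h_n=e^{-i|n|\alpha}h_n$ and represented, off the exceptional set $2t\in\pi\Z$, by the explicit Mehler kernel. The only delicate local point here is the metaplectic sign ambiguity, which may insert an extra factor $\pm1$ as $2t$ crosses $\pi\Z$; being a global unimodular constant, it is invisible to the modulus and thus harmless.

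Granting this identity, since $u(t,\cdot)=e^{-itH}u_0$ and $v(t,\cdot)=e^{-itH}v_0$, the global phase $e^{-idt}$ cancels in the modulus and the hypothesis $|u(t,x)|=|v(t,x)|$ for all $t\in\R$, $x\in\R^d$ becomes
$$
\abs{\ff^{2t}u_0(x)}=\abs{\ff^{2t}v_0(x)}\qquad\text{for all }t\in\R,\ x\in\R^d.
$$
As $t$ ranges over $\R$, the angle $\alpha=2t$ sweeps all of $\R$, so we obtain $\abs{\ff^{\alpha}u_0}=\abs{\ff^{\alpha}v_0}$ for \emph{every} angle $\alpha$. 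This is precisely the hypothesis of the fractional Fourier transform uniqueness theorem of \cite{Ja} (the same result that, after renormalization, yields Proposition \ref{prop1}), whose conclusion is $u_0=cv_0$ with $c\in\T$, as claimed.

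The main obstacle is therefore not the phase retrieval step itself but the rigorous passage from the PDE to the operator identity $e^{-itH}=e^{-idt}\ff^{2t}$: one must justify Mehler's formula on all of $L^2(\R^d)$ rather than on finite Hermite combinations, control the exceptional times $2t\in\pi\Z$ where the kernel degenerates, and pin down the normalization of the quadratic form so that the angle is exactly $2t$ and the phase exactly $e^{-idt}$. An essentially equivalent and perhaps more transparent route is to invoke the \emph{lens transform}, which maps solutions of the harmonic oscillator equation to solutions of the free equation via an explicit chirp multiplication together with the time--space rescaling $s=\tfrac12\tan(2t)$; since these operations preserve moduli up to a nonvanishing Jacobian, the hypothesis transfers directly to the free setting and one concludes by Proposition \ref{prop1} itself, using only $t\in(-\pi/4,\pi/4)$ (for which $s$ already covers all of $\R$).
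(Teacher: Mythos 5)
You should first be aware that the paper contains no proof of this statement: it is deliberately left as a \emph{conjecture}, supported only by the semiclassical heuristic of Section~\ref{sec:shr} (Wigner measures and the Vlasov equation, with the remark that for the harmonic oscillator the classical trajectories are circles) and by the observation, immediately after the statement, that the case of \emph{finite} linear combinations of Hermite functions can be handled as in \cite[Theorem 5.5]{Ja}. So your argument is not a variant of the paper's proof but a proof attempt of an open statement, and as far as I can check it is correct. The point you exploit, which the paper's heuristic does not, is that for a quadratic potential no semiclassical limit is needed: the oscillator flow is \emph{exactly} a fractional Fourier transform up to a global phase (equivalently, the Moyal and Poisson brackets coincide for quadratic symbols, so the Wigner transform of $u(t,\cdot)$ is exactly a rotation of that of $u_0$, with $\eps=1$). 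Your identity $e^{-itH}=e^{-idt}\ff^{2t}$, $H=-\Delta+|x|^2$, is immediate from the spectral theorem, since both sides are unitary and agree on the Hermite basis; your worry about extending Mehler's formula from finite Hermite combinations to all of $L^2(\R^d)$ is therefore a non-issue, and the exceptional times $2t\in\pi\Z$ affect only the kernel representation, not the operator identity. The one detail to pin down in your first route is that the transform of \cite{Ja} may be normalized with respect to dilated Hermite functions; since a unitary dilation passes through the modulus, conjugating by it transfers the uniqueness theorem, so this is harmless. Your second route (the lens transform) is the cleanest and uses only what the present paper states: with $s=\frac{1}{2}\tan 2t$ one has the exact identity $|u(t,x)|=(\cos 2t)^{-d/2}\bigl|e^{is\Delta}u_0(x/\cos 2t)\bigr|$ for $|t|<\pi/4$, so equality of the oscillator moduli for all $t$ forces equality of the free moduli for all $s\in\R$, and Proposition~\ref{prop1} concludes. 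In short, the oscillator data is, up to explicit chirps and dilations, a superset of the free data (it even contains the missing angle $\alpha=\pi/2$), so the conjecture is a \emph{consequence} of Proposition~\ref{prop1} rather than merely an analogue of it. What the paper's approach buys instead is scope: the Wigner-measure heuristic is designed to suggest the answer for general confining potentials, where the exact conjugation you use is unavailable and the question genuinely remains open; your reduction settles the quadratic case completely but says nothing beyond it.
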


A similar proof as for \cite[Theorem 5.5]{Ja} shows that the answer is positive when $u_0,v_0$
belong to the spectral set of the harmonic oscillator, that is, when they are linear combinations of Hermite functions. It is natural to speculate that the same is true when the potential
$W$ is confining, or at least when it satisfies a lower bound of the form $W(x)\geq c|x|^\alpha$,$c,\alpha>0$.

\section{Schr\"odinger equations on finite graphs}

In this section, we consider $\Gamma=(\vv,\ee)$ to be a finite non-oriented graph. Without loss of generality,
we assume that $\vv=\{1,\ldots,n\}$, we also assume that $n\geq 3$. For $x\in \vv$, we write $y\sim x$ to say that $y\in V$
is a neighbor of $x$ {\it i.e.} $(x,y)\in \ee$ (thus also $(y,x)\in\ee$) and $d(x)$ the number of neighbors
of $x$ (the degree). For a function on $\vv$, the {\em Laplace} operator $\Delta$ is defined by
$$
\Delta u(x)=\left(\sum_{y\sim x}u(y)\right)-d(x)u(x).
$$
Fix a function $W\,:\vv\to\R$ (note that $V$ is {\em real valued}), we say that a function $u\,:\R\times\vv\to\C$
satisfies the {\em Schr\"odinger equation} on $\Gamma$ with potential $V$ if
\begin{equation}
\label{eq:schrgra}
\left\{\begin{matrix}
i\partial_t u(t,x)=-\Delta u(t,x)+W(x)u(t,x)&\ t\in\R,\ x\in\vv,\\
u(0,x)=u_0(x)&\  x\in\vv\end{matrix}\right..
\end{equation}

The aim of this section is to start investigation of Question \ref{Q} in this setting.
This question is still largely unexplored and we will here focus on some simple observation 

\smallskip

Our first observation deals with the connectedness assumption.
Assume that $\Gamma$ has several connected components, $\Gamma=\bigcup_{j=1}^m\Gamma_j$,
$\Gamma_j=(\vv_j,\ee_j)$. Take a function $u_0$ on $\vv$ and write $u_0^j$
for its restriction to $\vv_j$ and let $u_j$ be the corresponding solution of
Schr\"odinger equation on $\Gamma_j$. Write $u=(u_j)_{j=1,\ldots,m}$ for the
function on $\vv$ obtained by gluing back the $u_j$'s. Then $u$ is a solution
of the Schr\"odinger equation on $\Gamma$ with initial condition $u_0$.
In particular, we may attribute an arbitrary phase $c_j\in\T$ to each $u_0^j$
leading to $u_c=(c_ju_j)_{j=1,\ldots,m}$, a solution of  the Schr\"odinger equation on $\Gamma$
that has same modulus as $u$ though $u_c$ is not a multiple of $u$.
From a practical point of view, this is harmless, nevertheless, the connectedness restriction
is necessary to obtain uniqueness up to a global phase factor.

\smallskip

Next, note that $u\to -\Delta u+Vu$ is a {\em real symmetric} operator on $\R^\vv=\R^n$
{\it i.e.} its matrix is real symmetric. Therefore, there is
an orthonormal basis of (real) eigenvectors $(\Phi_j)_{j=1,\ldots,n}$ with corresponding eigenvalues $\lambda_j\in\R$. To further fix ideas, we will assume that
$V\geq 0$, so that the operator is even positive. One can then order the eigenvalues
$0=\lambda_1<\lambda_2\leq\cdots\leq\lambda_n$. The fact that $\lambda_1=0$
and is a simple eigenvalue is a basic fact on spectral graph theory and comes from our connectivity assumption. The corresponding eigenvector is $\phi_1=n^{-1/2}(1,\ldots,1)^t$.
Therefore, if we decompose $u_0$ in this basis
$$
u_0(x)=\sum_{j=1}^na_j\phi_j(x)\quad a_j=\scal{u_0,\phi_j},
$$
we obtain the solution $u(t,x)$ at any time via
$$
u(t,x)=\sum_{j=1}^na_je^{i\lambda_j t}\phi_j(x).
$$

\smallskip

Our second observation is that multiple eigenvalues are a potential source of non uniqueness.

Before elaborating on this a bit further, it should also be noted that this situation is rare. Indeed, Tao and Vu \cite{TV} have shown that
if the graph is drawn randomly (e.g. when considering Erd\"os-Renyi random graphs $G(n,p)$) and the potential is chosen randomly as well
(say $n$ values drawn uniformly at random between $0$ and $1$), then the eigenvalues
are simple with high probability (at least when $n$ is large enough). Note that if
$p\geq2\dfrac{\log n}{n}$ (say) then $G(n,p)$ is also connected with high probability.

Let us now see how we are lead to a classical phase retrieval problem.
Assume that one of the eigenvalues is not simple. Call it $\lambda$
and let $m$ be its multiplicity and $E_\lambda$
be the corresponding eigenspace. Then, if $u_0\in E_\lambda$,
$u(t,x)=e^{i\lambda t}u_0(x)$ which has same modulus as $u_0$.
In this case, we are considering a finite dimension
phase retrieval problem. If $(e_j)_{j=1,\ldots,n}$
is the canonical basis of $\R^n$, we are asking whether $|\scal{u_0,e_j}|$ determines
$u_0$ up to a global phase factor when $u_0\in E_\lambda$.

\begin{lemma}
Let $E$ be a finite dimensional subspace of $\C^m$.
The following are equivalent:
\begin{enumerate}
\renewcommand{\theenumi}{\roman{enumi}}
\item there exist $f,g$ in $E$ such that there coordinates satisfy
$|f_j|=|g_j|$ for every $j$, though $f$ is not a multiple of $g$,

\item there exist $f,g$ in $E$ that are orthogonal and such that there coordinates satisfy
$|f_j|=|g_j|$ for every $j$, though $f$ is not a multiple of $g$,
\end{enumerate}
\end{lemma}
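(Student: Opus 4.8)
The implication (ii) $\Rightarrow$ (i) is immediate, since any pair realising (ii) realises (i) as well: (ii) merely adds the orthogonality constraint. So all the content is in (i) $\Rightarrow$ (ii), and the plan is to start from the non-proportional equimodular pair $f,g$ provided by (i) and manufacture an \emph{orthogonal} equimodular non-proportional pair, searching for it entirely inside $\vect\set{f,g}\subseteq E$. Observe at the outset that $f,g$ are linearly independent: they are non-proportional and neither vanishes, for if $g=0$ then $\abs{f_j}=\abs{g_j}=0$ gives $f=0$, making $f,g$ trivially proportional. Note also that summing $\abs{f_j}^2=\abs{g_j}^2$ over $j$ yields $\norm{f}=\norm{g}$.

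The device I would use is to look within the two-parameter family
\[
F=f+s\,e^{i\beta}g,\qquad G=s\,f+e^{i\beta}g,\qquad s>0,\ \beta\in\R.
\]
This particular shape is chosen for a built-in cancellation: expanding $\abs{F_j}^2$ and $\abs{G_j}^2$ and using $\abs{f_j}=\abs{g_j}$, the ``diagonal'' contributions are both $(1+s^2)\abs{f_j}^2$ and the cross terms coincide, each equal to $2s\,\mathrm{Re}\bigl(e^{-i\beta}f_j\overline{g_j}\bigr)$. Hence $\abs{F_j}=\abs{G_j}$ holds \emph{for every coordinate $j$ and every choice of $s,\beta$}, so equimodularity comes for free and both parameters stay available to enforce orthogonality.

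It then remains to solve $\scal{F,G}=0$. Writing $N=\norm{f}^2=\norm{g}^2$ and $\kappa=\scal{f,g}$, a short computation reduces the inner product to
\[
\scal{F,G}=2sN+e^{-i\beta}\kappa+s^2e^{i\beta}\overline{\kappa}.
\]
I would first clear the trivial case $\kappa=0$: there $f,g$ are already orthogonal, so (ii) holds with $F=f,\ G=g$. When $\kappa\neq0$, choosing $\beta$ with $e^{-i\beta}\kappa=-\abs{\kappa}$ cancels the imaginary part and turns the condition into the real quadratic
\[
\abs{\kappa}s^2-2Ns+\abs{\kappa}=0.
\]

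The one genuine obstacle — and the point where non-proportionality is essential — is to guarantee that this quadratic has an admissible root $s\in(0,1)$ (we need $s\neq1$ to keep $F,G$ independent). Its two roots multiply to $1$, so they are reciprocal positive reals; the decisive check is that the discriminant $4N^2-4\abs{\kappa}^2$ is strictly positive. This is exactly Cauchy--Schwarz, $\abs{\kappa}=\abs{\scal{f,g}}\le\norm{f}\,\norm{g}=N$, sharpened by the fact that equality would force $f,g$ proportional; since they are not, $\abs{\kappa}<N$ strictly, the roots are distinct, and the smaller one $s=\bigl(N-\sqrt{N^2-\abs{\kappa}^2}\bigr)/\abs{\kappa}$ lies in $(0,1)$. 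Finally, the coefficient matrix $\left(\begin{smallmatrix}1&se^{i\beta}\\ s&e^{i\beta}\end{smallmatrix}\right)$ has determinant $e^{i\beta}(1-s^2)\neq0$, so $F,G$ are linearly independent, hence non-proportional; they are orthogonal and equimodular by construction, which establishes (ii).
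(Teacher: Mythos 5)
Your proof is correct, but it takes a genuinely different route from the paper's. Both arguments dispose of (ii) $\Rightarrow$ (i) as trivial and hunt for the orthogonal pair entirely inside $\vect\set{f,g}$, but the constructions diverge from there. The paper first normalizes the phase of $g$ (by minimizing $\norm{f-cg}$ over $\abs{c}=1$) so that $\scal{f,g}$ is real and positive, then moves along the one-parameter family $f_\lambda=f-\lambda\frac{f+g}{2}$, $g_\lambda=g-\lambda\frac{f+g}{2}$: coordinatewise equimodularity is preserved because the map $(z,\zeta)\mapsto z-\lambda\frac{z+\zeta}{2}$ behaves symmetrically on equimodular pairs, and orthogonality is obtained \emph{non-constructively} from the intermediate value theorem, since $\scal{f_0,g_0}>0$ while $\scal{f_1,g_1}=-\norm{\frac{f-g}{2}}^2<0$. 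You instead use the two-parameter family $F=f+se^{i\beta}g$, $G=sf+e^{i\beta}g$, for which equimodularity holds identically in $(s,\beta)$, and you solve the orthogonality equation exactly: the phase $\beta$ absorbs $\arg\scal{f,g}$ and $s$ is an explicit root of a real quadratic whose discriminant is positive precisely by \emph{strict} Cauchy--Schwarz --- this is where non-proportionality enters your argument, playing the role that $f\neq g$ (hence $\scal{f_1,g_1}<0$) plays in the paper's. Your version is fully constructive, yielding closed formulas for the orthogonal pair, and it also handles explicitly a non-degeneracy point the paper leaves implicit: you verify linear independence of $F,G$ via the determinant $e^{i\beta}(1-s^2)\neq0$, whereas the paper tacitly needs $f_\lambda,g_\lambda\neq0$ (which does hold, since $f_\lambda=0$ would force $f$ to be proportional to $g$). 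The paper's argument is shorter and softer; yours trades the intermediate value theorem for explicit algebra.
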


This lemma tells us that we will have non-uniqueness for $u_0\in E_m$
when there are already two orthogonal eigenfunction for the same eigenvalue
that have same modulus (entrywise). It is easy to find two such eigenvectors on the complete graph $K_n$
when $n\geq 3$.

This lemma seems to be known for some time, at least a more evolved version of the lemma can be found in a recent preprint
by Freeman {\it et al} \cite{Fral}. For sake of completeness, here is a proof:

\begin{proof}
One way is obvious, for the second one, assume that $f\not=g$ are such that $|f_j|=|g_j|$
and $\scal{f,g}\not=0$.
Consider the quantity $\min_{|c|=1}\|f-cg\|^2$. Up to replacing $g$ by a unimodular constant
times $g$, we may assume that this minimum is attaigned for $c=1$, that is
$\min_{|c|=1}\|f-cg\|^2=\|f-g\|^2$. In other words, when $|c|=1$,
$$
\|f\|^2+\|g\|^2-2\Re\bar c\scal{f,g}
\geq \|f\|^2+\|g\|^2-2\Re\scal{f,g}.
$$
Now $\Re\bar c\scal{f,g}$ is maximised if and only if $c$ is the phase of $\scal{f,g}$ 
so that $\scal{f,g}$ is real, positive.

Next consider $f_\lambda=f-\lambda\frac{f+g}{2}$ and $g_\lambda=g-\lambda\frac{f+g}{2}$
so that $f_\lambda,g_\lambda\in E$.
Note that $\scal{f_0,g_0}> 0$ while $\scal{f_1,g_1}=-\norm{\frac{f-g}{2}}^2<0$.
Therefore, there exists $\lambda$ such that $\scal{f_\lambda,g_\lambda}=0$.

Further if $z,\zeta\in\C$ are such that $|z|=|\zeta|$
then , for $0\leq\lambda\leq 1$,
\begin{eqnarray*}
\abs{z-\lambda\frac{z+\zeta}{2}}^2
&=&\abs{\left(1-\frac{\lambda}{2}\right)z+\frac{\lambda}{2}\zeta}^2\\
&=&\left(1-\frac{\lambda}{2}\right)^2|z|^2+\frac{\lambda^2}{4}|\zeta|^2\\
&&+\left(1-\frac{\lambda}{2}\right)\lambda\Re(z\bar\zeta).%\\
%&=&\left(1-\frac{\lambda}{2}\right)^2|\zeta|^2+\frac{\lambda^2}{4}|z|^2
%+\left(1-\frac{\lambda}{2}\right)\lambda\Re(\bar z\zeta)\\
\end{eqnarray*}
Now as $z$ and $\zeta$ have same modulus, we can exchange them and unwind the
computation to obtain that this is 
$\abs{\zeta-\lambda\dfrac{z+\zeta}{2}}^2$. 
This shows that the coordinates of $f_\lambda$ and of $g_\lambda$ have same modulus.
\end{proof}

The final observation so far is that a stronger situation allows for uniqueness.
Indeed, looking at the $m$-th coordinate of $u(t,x)$, it is of the form
$\dst\sum_{j=1}^na_j\phi_{j,m}e^{i\lambda_j t}$.
The square modulus of this quantity is thus
$$
\sum_{j,k=1}^na_j\overline{a_k}\phi_{j,m}\phi_{k,m}e^{i(\lambda_j-\lambda_k) t}
$$
and this quantity is known for all $t$. 

\begin{definition}
We will say that the multiset $\{\lambda_j,\ j=1,\ldots,n\}$
is {\em totally dissociated} if none of the $\lambda_j$'s is repeated
and if $\lambda_j-\lambda_k\not=\lambda_{j'}-\lambda_{k'}$ if $(j,k)\not=(j',k')$.
\end{definition}

Note that this is a generic condition for a set in $\R^n$. We will assume that the
eigenvalues of $-\Delta+V$ are totally dissociated.
Then, from the linear independence of $\{e^{i\lambda t}\}_{\lambda\in\R}$, we obtain that,
$$
a_j\overline{a_k}\phi_{j,m}\phi_{k,m},\quad j,k,m\in\{1,\ldots,n\}
$$
is uniquely determined by $|u(t,x)|$.

We will need a second property, this time of the eigenvectors:

\begin{definition}
Let $\Gamma=(\vv,\ee)$ be a finite non-oriented connected graph and $V\,:\vv\to\R$
be a potential. Let $\phi_1,\ldots,\phi_n$ be a corresponding orthonormal basis of eigenfunctions. Define the graph $\Gamma_V=(\vv_V,\ee_V)$ with $\vv_V=\{1,\ldots,n\}$
and $(j,k)\in\ee_V$ if and only if there exists $m=m(x,y)\in\{1,\ldots,m\}$ such that
$\phi_{j,m}\phi_{k,m}\not=0$.

We say that $V$ satisfies {\em property (S)}
if $\Gamma_V$ is {\em complete}.
\end{definition}

This situation is generic since the eigenvectors depend continuously on $V$,
any small change in $V$ would thus imply that each $\phi_j$ has full support.

We can now prove our main result in this section:

\begin{theorem}
Let $\Gamma=(\vv,\ee)$ be a non-oriented finite connected graph and $V\,:\rr\to\R$
be a potential. Assume that the eigenvalues of $-\Delta+V$ are totally dissocated
and that $V$ satisfies property (S). Then the modulus $|u(x,t)|$ of the solution
of the Schr\"odinger equation \eqref{eq:schrgra} uniquely determines $u_0$
up to a global phase factor.
\end{theorem}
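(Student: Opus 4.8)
The plan is to read off, from the time dependence of $\abs{u(t,m)}^2$ at each vertex $m$, enough bilinear data in the coordinates $a_j=\scal{u_0,\phi_j}$ to reconstruct the rank-one Hermitian matrix $(a_j\overline{a_k})_{j,k}$, which pins down $u_0=\sum_j a_j\phi_j$ up to a global phase factor $c\in\T$.

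First I would expand the square modulus of the $m$-th coordinate of the solution. Since $u(t,m)=\sum_{j}a_j\phi_{j,m}e^{i\lambda_j t}$, one has
$$
\abs{u(t,m)}^2=\sum_{j,k=1}^n a_j\overline{a_k}\,\phi_{j,m}\phi_{k,m}\,e^{i(\lambda_j-\lambda_k)t},
$$
a trigonometric polynomial in $t$ whose frequencies are the differences $\lambda_j-\lambda_k$. Total dissociation guarantees that the off-diagonal differences $\lambda_j-\lambda_k$ ($j\neq k$) are pairwise distinct and nonzero, while all diagonal terms contribute to the single frequency $0$. By linear independence of the characters $\{e^{i\lambda t}\}_{\lambda\in\R}$, the datum $\abs{u(t,m)}$ (known for every $t$ and every $m$) therefore determines, for each ordered pair $j\neq k$ and each vertex $m$, the coefficient $a_j\overline{a_k}\,\phi_{j,m}\phi_{k,m}$, together with the constant term $\sum_j\abs{a_j}^2\phi_{j,m}^2$.

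Next I would use property (S). Since $\Gamma_V$ is complete, for every pair $j\neq k$ there is a vertex $m$ with $\phi_{j,m}\phi_{k,m}\neq0$; dividing the corresponding coefficient by this known nonzero real number recovers $a_j\overline{a_k}$ for all $j\neq k$. It then remains to assemble these off-diagonal products into a global phase class. When $u_0$ has at least three nonzero coordinates, say $a_j,a_k,a_l\neq0$, the identity $\abs{a_j}^2=\abs{a_j\overline{a_k}}\,\abs{a_j\overline{a_l}}\big/\abs{a_k\overline{a_l}}$ yields every $\abs{a_j}^2$ on the support from off-diagonal data alone; fixing the phase of one nonzero coordinate then determines all the others through $a_k=\overline{a_j\overline{a_k}}\big/\overline{a_j}$, and changing that phase multiplies $u_0$ by a unimodular constant. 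This is exactly the standard fact that a rank-one Hermitian matrix determines its generating vector up to $\T$.

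The step I expect to be the main obstacle is the degenerate low-support case, which is also where the collapse of the zero frequency bites: if $u_0$ has only one or two nonzero coordinates, the off-diagonal products are insufficient (for two indices $p,q$ they determine $\abs{a_p}\abs{a_q}$ and $a_p\overline{a_q}$ but not $\abs{a_p}$ and $\abs{a_q}$ separately), and one must fall back on the constant-term data $\sum_j\abs{a_j}^2\phi_{j,m}^2$. Recovering the individual $\abs{a_j}^2$ from this linear system requires the squared eigenvector profiles $(\phi_{j,m}^2)_m$ to be suitably independent, a non-degeneracy that is generic but not literally implied by total dissociation and property (S) as stated; I would either invoke the genericity of $V$ noted after the definition of property (S) (under which each $\phi_j$ has full support and the profiles are independent) or strengthen the hypotheses slightly to cover these boundary cases. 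Modulo this point, the three ingredients --- frequency separation from total dissociation, coordinate-wise nonvanishing from property (S), and rank-one recovery --- combine to give $u_0$ up to a global phase factor.
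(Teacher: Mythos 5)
Your core mechanism is the same as the paper's: expand $\abs{u(t,m)}^2$ as a trigonometric polynomial in $t$, use total dissociation to isolate the coefficients of the nonzero frequencies $\lambda_j-\lambda_k$, then use property (S) to divide out a nonvanishing factor $\phi_{j,m}\phi_{k,m}$. The assembly differs slightly --- the paper normalizes the global phase at the first index $\ell$ with $a_\ell\neq0$ and compares the pairs $(j,\ell)$, whereas you reconstruct the full rank-one matrix $(a_j\overline{a_k})_{j\neq k}$ --- but for initial data with at least three nonzero coefficients both assemblies work, and yours is written more carefully.

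The point you flag as the main obstacle is a genuine gap, and it is in fact exactly the gap in the paper's own proof. The paper asserts \eqref{eq:shrgrphasefinal} for all $j,k,m$ \emph{including} $j=k$, and deduces $\abs{a_j}=\abs{b_j}$ for every $j$; but all diagonal terms sit at the single frequency $0$, so linear independence of $\{e^{i\lambda t}\}_{\lambda\in\R}$ only yields $\sum_j\abs{a_j}^2\phi_{j,m}^2=\sum_j\abs{b_j}^2\phi_{j,m}^2$, and no reading of total dissociation can separate these terms. Moreover, this is not a removable technicality: the theorem as stated is false. Take $\Gamma=K_3$, $\phi_1=\frac12(1,\sqrt2,1)^t$, $\phi_2=\frac12(1,-\sqrt2,1)^t$, $\phi_3=\frac{1}{\sqrt2}(1,0,-1)^t$, and eigenvalues $\mu_1=-2\sqrt2$, $\mu_2=0$, $\mu_3=2-\sqrt2$. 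Then $M=\sum_i\mu_i\phi_i\phi_i^t$ has all off-diagonal entries equal to $-1$, so $M=-\Delta+V$ on $K_3$ for a real potential $V$ (add a constant to $V$ and the $\mu_i$ if positivity is wanted); the $\mu_i$ are totally dissociated and property (S) holds (all three vectors are nonzero at the first vertex); yet $u_0=\phi_1$ and $v_0=\phi_2$ give $\abs{u(t,x)}=\abs{\phi_1(x)}=\abs{\phi_2(x)}=\abs{v(t,x)}$ for all $t,x$, while $v_0$ is not a unimodular multiple of $u_0$.

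The repair is the extra hypothesis you anticipate, and pairwise distinctness suffices --- you do not need linear independence of the squared profiles: assume that $(\phi_{j,m}^2)_m\neq(\phi_{k,m}^2)_m$ for $j\neq k$, which is again generic. This closes both degenerate cases. If $u_0=a_p\phi_p$, the vanishing of all off-diagonal products forces $v_0=b_q\phi_q$ for a single $q$, and the constant term gives $\abs{a_p}^2\phi_{p,m}^2=\abs{b_q}^2\phi_{q,m}^2$ for all $m$; summing over $m$ and using orthonormality yields $\abs{a_p}=\abs{b_q}$, hence $\phi_{p,m}^2=\phi_{q,m}^2$ for all $m$ and so $q=p$. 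If the support is $\{p,q\}$, the off-diagonal data force $b_p=ta_p$ and $b_q=a_q/\bar t$ for some $t\neq0$, and the constant term then forces $\abs{t}=1$ unless $\phi_{p,m}^2=\phi_{q,m}^2$ for all $m$. With this hypothesis added, your argument is a complete and correct proof; the paper's proof needs the same amendment.
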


\begin{proof}
Let $\phi_j$ be the set
let
$$
u_0=\sum_{j=1}^na_j\phi_j\quad v_0=\sum_{j=1}^nb_j\phi_j
$$
be such that $|u(t,x)|=|v(t,x)|$ for all $t\in\R$ and $x\in\vv$.
As said above, from the total dissociativity of the spectrum, for all $j,k,m$,
\begin{equation}
\label{eq:shrgrphasefinal}
a_j\overline{a_k}\phi_{j,m}\phi_{k,m}=
b_j\overline{b_k}\phi_{j,m}\phi_{k,m}.
\end{equation}
Taking $j=k$ in \eqref{eq:shrgrphasefinal}, we obtain 
$|a_j|^2\phi_{j,m}^2=|b_j|^2\phi_{j,m}^2$ for all $m$ and, as at least for one $m$
we have that $\phi_{j,m}\not=0$, we obtain $|a_j|^2=|b_j|^2$.

Now, let $\ell$ be the first $j$ for which $a_j\not=0$ so that $a_j=b_j=0$
for $j<\ell$ and $|a_\ell|^2=|b_\ell|^2\not=0$. Replacing $v$ by $cv$
for some $c\in\T$, that is replacing $(b_j)_j$ by $(cb_j)$,
we may assume that $a_{\ell}=b_{\ell}\not=0$. 
Then, taking $k=\ell$ and $j>\ell$ in \eqref{eq:shrgrphasefinal} we obtain
$a_j\phi_{j,m}\phi_{\ell,m}=b_j\phi_{j,m}\phi_{\ell,m}$.
Our hypothesis is that there is an $m$ such that $\phi_{j,m}\phi_{\ell,m}\not=0$
so that this implies that $a_j=b_j$ for all $j$ and finally that $u_0=v_0$.
\end{proof}

Note that if the hypothesis is not met, the result is false: assume that one of the vertices is not a neighbor to all of the others. Say $1$ is a neighbor of $2,\ldots,m$ but not
of $m+1,\ldots,n$. Take $a_1=b_1=1$, $a_j=b_j=0$ for $j=2,\ldots,m$
 and $a_{j}=-b_j=1$ for $j=m+1,\ldots,n$. Then
$$
a_j\overline{a_k}\phi_{j,m}\phi_{k,m}=\begin{cases}\phi_{j,m}\phi_{k,m}&\mbox{when }j=k=1\mbox{ or }j,k\geq m+1\\
0&\mbox{otherwise}\end{cases}
$$
and is therefore $=b_j\overline{b_k}\phi_{j,m}\phi_{k,m}$. Thus, if 
$u_0=\phi_1+\phi_{m+1}+\cdots+\phi_n$ and $v_0=\phi_1-\phi_{m+1}-\cdots+\phi_n$
then the corresponding solutions have same modulus at all time.

In this case, one may nevertheless obtain results, but no longer for every $u_0$.
An inspection of the above proof and of the counterexample show that
the problems come from $a_j$'s that are $0$. It is actually enough
to have an $a_j\not=0$ so that in the graph $\Gamma_V$, $j$ is a neighbor of every other vertex.
When $\Gamma$ is connected and $V=0$, $\phi_1=n^{-1/2}(1,\ldots,1)^t$, is an eigenvector
(corresponding to the $0$ eigenvalue) so that all edges are connected to $1$ in $\Gamma_0$.
In particular, if $u_0=(u_{0,j})_{j=1,\ldots,n}$ is such that
$$
a_1=\scal{u_0,\phi_1}=n^{-1/2}\sum_{j=1}^nu_{0,j}\not=0
$$
then $|u(x,t)|$ uniquely determines $u_0$.

One may perturb this result to show that if $V$ is small enough, there exists $\eps_V>0$
such that, if $\dst\abs{\sum_{j=1}^nu_{0,j}}>\eps_V$ then 
$|u(x,t)|$ still uniquely determines $u_0$.

\end{document}